\documentclass[11pt]{amsart} 
\usepackage{amssymb}
\usepackage{amsmath}
\usepackage{tikz} 

\theoremstyle{plain}
\newtheorem{theorem}{Theorem}
\newtheorem{lemma}[theorem]{Lemma}
\newtheorem{pro}[theorem]{Proposition}

\newtheorem{conj}[theorem]{Conjecture}

\theoremstyle{definition}

\begin{document}
\title{On Gardner's conjecture}

\author{G\'abor Kun}

\address{Alfr\'ed R\'enyi Institute of Mathematics, H-1053 Budapest, Re\'altanoda u. 13-15., Hungary}
\address{Institute of Mathematics, E\"otv\"os L\'or\'and University, P\'azm\'any P\'eter s\'et\'any 1/c, H-1117 Budapest, Hungary}

\email{kungabor@renyi.hu}

\thanks{The author's work on the project leading to this application has received funding from the European Research Council (ERC) under the European Union's Horizon 2020 research and innovation programme (grant agreement No. 741420), from the \'UNKP-20-5 New National Excellence Program of the Ministry of Innovation and Technology from the source of the National Research, Development and Innovation Fund and from the J\'anos Bolyai Scholarship of the Hungarian Academy of Sciences.}

\maketitle

\begin{abstract}
Gardner conjectured that if two bounded measurable sets $A,B \subset \mathbb{R}^n$ are equidecomposable by a set of isometries $\Gamma$ generating an amenable group then $A$ and $B$ admit a measurable equidecomposition by all isometries. Cie\'sla and Sabok asked if there is a measurable equidecomposition using isometries only in the group generated by $\Gamma$. We answer this question negatively. 
\end{abstract}

\section{Introduction}

Let $\Gamma$ be a set of isometries  of $\mathbb{R}^n$. We say that the sets $A, B \subseteq \mathbb{R}^n$ are $\Gamma$-{\it equidecomposable} if there are finite partitions $A=\cup_{n=1}^k A_n, B=\cup_{n=1}^k B_n$ and isometries $\gamma_1, \dots ,\gamma_k \in \Gamma$ such that $A_i=\gamma_iB_i$ holds for every 
$1 \leq i \leq k$. $A$ and $B$ admit a $\Gamma$-equidecomposition if and only if there exists a finite set of generators in $\Gamma$ such that the bipartite restriction 
of the Schreier graph of the generated subgroup $\langle \Gamma \rangle$ with respect to the classes $A$ and $B$ admits a  perfect matching. The Hall condition is necessary for the existence of an equidecomposition, and if $\Gamma$ is finite then it is also sufficient. An equidecomposition is {\it measurable} if the sets in the partitions are Lebesgue measurable. We recommend M\'ath\'e \cite{M} on measurable equidecompositions, 
Kechris and Marks \cite{KM} on measurable matchings and Tomkowicz and Wagon \cite{TW} on the Banach-Tarski paradox.
The Banach-Tarski paradox \cite{BT} claims that any two bounded subsets of $\mathbb{R}^3$ with nonempty interior are equidecomposable. 
A group is {\it amenable} if it admits no paradoxical decomposition. Von Neumann introduced amenable groups in order to explain the paradox \cite{vN}. 
Mycielski showed that if two measurable sets are equidecomposable by an amenable group then they have equal Lebesgue measure \cite{My}. 
This is the largest class of groups where the Hall condition may imply the existence of a measurable equidecomposition. 
Gardner made the following conjecture. 

\begin{conj} \cite{G}
Consider the bounded measurable sets $A, B \subseteq \mathbb{R}^n$ and the set of isometries $\Gamma$. Assume that $\Gamma$ generates an amenable group. If $A$ and $B$ are $\Gamma$-equidecomposable then they admit a measurable equidecomposition.
\end{conj}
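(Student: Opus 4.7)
The plan is to combine Mycielski's equal-measure theorem with the hyperfiniteness of amenable group actions, while exploiting the added freedom of arbitrary Euclidean isometries (outside $\langle\Gamma\rangle$) to repair the boundary mismatch intrinsic to approximate matchings in the amenable setting. First I would replace $\Gamma$ by the finite set of isometries actually appearing in the given $\Gamma$-equidecomposition, and apply Mycielski's theorem to obtain $\mu(A)=\mu(B)$. The $\Gamma$-equidecomposition simultaneously witnesses that the bipartite Schreier graph of $\langle\Gamma\rangle$ on $A\sqcup B$ satisfies Hall's condition, which is the crucial structural input for what follows.

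Next I would invoke the Connes--Feldman--Weiss theorem: since $\langle\Gamma\rangle$ is amenable, its orbit equivalence relation on $A\cup B$ is Lebesgue-hyperfinite. Thus there is an increasing sequence of finite Borel subequivalence relations $E_1\subseteq E_2\subseteq\cdots$ exhausting the orbit relation almost everywhere. On each $E_n$-class the restricted bipartite Schreier graph is finite and satisfies Hall, so it admits a combinatorial perfect matching; selecting such matchings in a uniformly Borel way yields Borel equidecompositions $M_n$ between $A\setminus\partial_n^A$ and $B\setminus\partial_n^B$, where the unmatched boundary sets $\partial_n^A,\partial_n^B$ have equal measure tending to $0$ by the F{\o}lner property.

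The crucial last step is to match the residual pieces $\partial_n^A$ and $\partial_n^B$ using arbitrary Euclidean isometries. For small-measure subsets of a fixed bounded region, Steinhaus-type density arguments show that there exists a translation carrying a positive-measure chunk of $\partial_n^A$ into $\partial_n^B$. Iterating measurably, I would hope to absorb the defect using a uniformly bounded number of isometries at each stage $n$, and then a diagonal/compactness extraction in the space of finite Borel partitions should produce a single measurable equidecomposition of $A$ and $B$ by finitely many isometries.

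The main obstacle is precisely this last clean-up step. The Marks--Unger and Grabowski--M{\'a}th{\'e}--Pikhurko measurable circle-squaring results handle equal-measure sets whose boundary has small upper Minkowski dimension via Fourier-analytic flows, but Gardner's conjecture permits arbitrary bounded measurable sets with no boundary regularity whatsoever. One must therefore develop an intrinsic amenable-graphing analogue, in which F{\o}lner averaging replaces the spectral-gap techniques of the non-amenable Lyons--Nazarov matching, and in which the extra isometries outside $\langle\Gamma\rangle$ supply exactly the matching freedom that the absence of expansion denies. That this added freedom is genuinely necessary is foreshadowed by the counterexample to the Cie\'sla--Sabok variant announced in the abstract.
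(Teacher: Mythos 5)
This statement is Gardner's Conjecture, which the paper does not prove and which remains open; the paper only refutes the stronger Cie\'sla--Sabok variant in which the isometries must come from $\langle\Gamma\rangle$. Your proposal is therefore not being measured against a proof in the paper but against the conjecture itself, and as written it has a genuine gap that you yourself flag: the entire difficulty is concentrated in the ``clean-up'' step, and that step is not an argument but a restatement of the problem. Absorbing the residual sets $\partial_n^A$ and $\partial_n^B$ by arbitrary isometries amounts to measurably equidecomposing two bounded measurable sets of equal (small) measure with no structural control whatsoever --- which is essentially the full strength of what you are trying to prove. A Steinhaus-type argument only produces one translation matching a positive-measure chunk; iterating it exhausts the defect only after countably many pieces, and there is no compactness in the space of finite Borel partitions that would let you extract a single finite measurable equidecomposition from the sequence $M_n$. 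The pieces of $M_n$ genuinely churn as $n$ grows, and nothing in the construction stabilizes them.

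There are also two earlier soft spots. First, restricting the bipartite Schreier graph to a finite class of $E_n$ does not preserve Hall's condition: the witnesses for Hall in the infinite graph may leave the class, so the finite classes admit only maximum matchings with a defect, and the claim that the two defect sets have equal measure at each finite stage needs an invariance argument, not just the F{\o}lner property. Second, even granting vanishing defect, the limit of the $M_n$ is an almost-everywhere-defined matching using countably many group elements, and converting that into a \emph{finite} partition is exactly the issue that makes Gardner's conjecture hard (and is why the circle-squaring results of Grabowski--M\'ath\'e--Pikhurko and Marks--Unger need quantitative discrepancy estimates rather than bare hyperfiniteness). The paper's actual contribution runs in the opposite direction: it exhibits a concrete $\Gamma$ on $\mathbb{R}$ (four isometries, Laczkovich's graph) where a $\Gamma$-equidecomposition exists but no measurable $\langle\Gamma\rangle$-equidecomposition does, showing that any proof of Gardner's conjecture must genuinely use isometries outside the generated group --- a point your last paragraph correctly anticipates, but which your outline does not overcome.
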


The most famous open question on equidecompositions was Tarski's circle squaring problem \cite{T} asking if the disc and the square of unit area admit an equidecomposition. Laczkovich solved this positively \cite{Lsquaring}.
His construction was not measurable, and left the measurable circle squaring problem open for decades.
Grabowski, M\'ath\'e and Pikhurko have found a measurable equidecomposition \cite{GMP}, while Marks and Unger have given a Borel equidecomposition \cite{MU}. 
Both of these works rely heavily on the details of Laczkovich's proof. Gardner's conjecture would provide a simple solution using only the existence of an equidecomposition.

Note that Gardner's conjecture does not require that the isometries in the measurable equidecomposition are in $\Gamma$. Laczkovich has found an example of two measurable sets admitting a $\Gamma$-equidecomposition but no measurable $\Gamma$-equidecomposition \cite{Lmatching}.
Cie\'sla and Sabok have studied the conjecture for probability measure preserving group actions.
They asked if the elements of $\langle \Gamma \rangle$ are sufficient (Question 4, \cite{CS}), and proved this for abelian groups if the sets are uniformly distributed.
We give a negative answer to their question.

\begin{theorem}~\label{only}
Consider an irrational number $0<\alpha<1$ and the following set consisting of four isometries of $\mathbb{R}$: \\
$$\begin{displaystyle} \Gamma = 
\{ x \mapsto x, \text{ } x \mapsto x+ 2\alpha,\text{ } x \mapsto 2-x,\text{ } x \mapsto 2\alpha-x \}. \end{displaystyle}$$
The intervals $[0,1]$ and $[\alpha, 1+\alpha]$ admit a $\Gamma$-equidecomposition, but no measurable 
$\langle \Gamma \rangle$-equidecomposition.
\end{theorem}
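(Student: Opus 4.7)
The plan is to establish the two assertions separately.

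For the existence of a $\Gamma$-equidecomposition, I would verify Hall's condition on the bipartite Schreier graph of $\langle\Gamma\rangle$ restricted to $[0,1]$ and $[\alpha, 1+\alpha]$ (as described in the introduction), then apply K\"onig's theorem. Every element of $\langle\Gamma\rangle$ has the form $x \mapsto \epsilon x + \lambda$ with $\epsilon \in \{\pm 1\}$ and $\lambda \in \Lambda := 2\alpha\mathbb{Z} + 2\mathbb{Z}$, so since $\alpha$ is irrational each $\langle\Gamma\rangle$-orbit on $\mathbb{R}$ is countable. On each such orbit the bipartite graph is countable, and Hall's condition should follow from the equality of Lebesgue measures $\mu([0,1]) = \mu([\alpha,1+\alpha])$ combined with the measure-preserving nature of the action. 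Assembling the resulting orbit-wise perfect matchings via the axiom of choice would then yield a partition into at most four pieces, one per element of $\Gamma$.

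The harder direction is the second one: showing no measurable $\langle\Gamma\rangle$-equidecomposition exists. I would proceed by contradiction. Assume measurable partitions $[0,1] = \bigcup A_i$ and $[\alpha,1+\alpha] = \bigcup B_i$ with $A_i = \gamma_i B_i$ and $\gamma_i(x) = \epsilon_i x + \lambda_i$, $\lambda_i \in \Lambda$. Since $\alpha \notin \Lambda$ by the irrationality of $\alpha$ (if $\alpha = 2\alpha m + 2n$ then $\alpha = 2n/(1-2m) \in \mathbb{Q}$), the plan is to derive a contradiction between the shift required to carry $[0,1]$ to $[\alpha,1+\alpha]$ and the constraint that each piece is transported by an isometry whose translation part lies in $\Lambda$. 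A natural candidate invariant is a Fourier coefficient: from $\hat{1}_{[\alpha,1+\alpha]}(\xi) = e^{-2\pi i \alpha \xi}\hat{1}_{[0,1]}(\xi)$ together with the identity $\hat{1}_{[0,1]}(\xi) = \sum_T e^{-2\pi i \lambda_i \xi}\hat{1}_{B_i}(\xi) + \sum_R e^{-2\pi i \lambda_i \xi}\overline{\hat{1}_{B_i}(\xi)}$ (with $T$, $R$ indexing translation and reflection pieces), one obtains at each frequency $\xi$ a complex-linear relation; testing against well-chosen $\xi$ (for instance half-integers, where the twists $e^{-\pi i \lambda_i}$ reduce to $e^{-2\pi i \alpha m_i}$) together with the normalization $\sum \mu(B_i) = 1$ should rule out the required shift.

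The main obstacle will be making this Fourier-side contradiction precise. A direct moment approach comparing $\int_{A} x^k$ versus $\int_B y^k$ via the pieces turns out to be inadequate, because the resulting equations have unconstrained real coefficients $\mu(B_i)$ and $\int_{B_i} y^j \, dy$ and cannot be separated into components over the $\mathbb{Q}$-basis $\{1,\alpha\}$ of $\mathbb{Q} + \mathbb{Q}\alpha$. The decisive ingredient is presumably the measurability of the partition, which together with ergodicity of the irrational rotation by $2\alpha$ on $\mathbb{R}/2\mathbb{Z}$ (or a similar dynamical input) should force the identity to fail at an appropriate $\xi$. Isolating this dynamical input, and converting it into a clean arithmetic obstruction from $\alpha \notin \Lambda$, is where I expect the authors' main innovation to lie.
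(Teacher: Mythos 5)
Your plan for the second (main) direction has a genuine gap, and you have in effect flagged it yourself: the Fourier identity you write down is, at each frequency $\xi$, two linear relations among $k$ unconstrained complex numbers $\hat{1}_{B_i}(\xi)$, and such a system has solutions; the arithmetic fact $\alpha \notin \Lambda = 2\alpha\mathbb{Z}+2\mathbb{Z}$ is not by itself an obstruction to a measurable $\Lambda$-equidecomposition (whether two sets differing by a translation outside the group are measurably equidecomposable by the group is a genuinely dynamical question, sensitive to much more than the translation part). The ``decisive ingredient'' you defer to the authors is the entire proof, and it does not take the form you predict. The paper does not produce any direct arithmetic or spectral obstruction. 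Instead it reduces to Laczkovich's theorem that the bipartite Borel graph $G$ on $[0,1]\times[\alpha,1+\alpha]$ given by the four isometries of $\Gamma$ admits a perfect matching but no \emph{measurable} one. The reduction has two steps. First, a lemma proved by induction on the coefficient $b$ in $\gamma(x)=ax+2\alpha b+2c$: any $\gamma\in\langle\Gamma\rangle$ with $y,\gamma(y)\in[0,1]$ moves $y$ by graph distance at most $2|b|$ in $G$. Hence a measurable $\langle\Gamma\rangle$-equidecomposition is exactly a measurable perfect matching of the bounded power $G^K$ (vertices joined when their $G$-distance is odd and at most $K$) for some odd $K$. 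Second, a ``rounding'' proposition: for an acyclic $2$-regular measure-preserving Borel bipartite graph $H$, a measurable perfect matching of $H^K$ can be iteratively improved --- uncrossing pairs of overlapping rays so that $\int \mathrm{dist}_H(x,M(x))\,d\lambda$ strictly decreases --- until in the limit the matched rays are nested on almost every component, at which point matching each vertex to its neighbour along its ray gives a measurable perfect matching of $H$ itself. This proposition is the paper's main innovation, and nothing in your proposal plays its role.

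A secondary issue concerns the first direction. Hall's condition on a single $\langle\Gamma\rangle$-orbit cannot ``follow from the equality of Lebesgue measures'': each orbit is countable, hence null, so measure says nothing about it. The actual verification is combinatorial --- one shows every component of $G$ is a bi-infinite path, a semi-infinite path, an even cycle, or a path with an even number of vertices (never a path with an odd number of vertices) --- and this is precisely Laczkovich's construction, which the paper simply cites for both the existence of a perfect matching of $G$ and the nonexistence of a measurable one. So the correct organization of the proof is: quote Laczkovich for both halves concerning $G$, and supply the new bounded-displacement lemma plus the rounding proposition to transfer the negative half from $\Gamma$ to all of $\langle\Gamma\rangle$.
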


Technically, our example is not on a probability measure space, but on the real line. However, it can be easily modified to one on the closed cycle.
Note that there is an equidecomposition using one single translation by $\alpha$. The restriction of the bipartite Schreier graph of these four isometries in $\Gamma$ is essentially Laczkovich's construction in \cite{Lmatching}. 

\begin{center}

\begin{tikzpicture}

\node at (2,2) [label=left:$(0{,}2\alpha)$] {};
\node at (4,0) [label=below:$(\alpha{,}\alpha)$] {};
\node at (8,4) [label=right:$(1{,}1)$] {};
\node at (6,6) [label=above:$(1-\alpha{,}1+\alpha)$] {};
\node at (2,3) [label=left:${\bf J}$] {};
\node at (5,0) [label=below:${\bf I}$] {};

\draw[gray, thin] (2,0) -- (2,6);
\draw[gray, thin] (2,6) -- (8,6);
\draw[gray, thin] (8,6) -- (8,0);
\draw[gray, thin] (8,0) -- (2,0);
\draw[black, ultra thick] (2,2) -- (6,6);
\draw[black, ultra thick] (6,6) -- (8,4);
\draw[black, ultra thick] (8,4) -- (4,0);
\draw[black, ultra thick] (4,0) -- (2,2);

\end{tikzpicture}

\end{center}

\section{The proof}

Consider the Borel bipartite graph $G$ whose classes $I$ and $J$ are Borel isomorphic to the intervals $[0,1]$ and $[\alpha, 1+\alpha]$, respectively, and the set of edges corresponds to the union of the restriction of the graph of the four isometries. See $E(G) \subseteq I \times J$ in the figure.
Laczkovich showed that this graph admits a perfect matching, but no measurable perfect matching. Hence $[0,1]$ and $[\alpha, 1+\alpha]$  admit an equidecomposition, but no measurable equidecomposition using these four isometries.

Four vertices of $G$ (in different components) have degree one,
the other vertices have degree two. Laczkovich proved that none of the connected components is an even path, hence every connected component of $G$ is a bi-infinite path, a semi-infinite path, an even cycle or an odd path.
We will prove the theorem by contradiction showing that if $[0,1]$ and $[\alpha, 1+\alpha]$ admitted a measurable $\Gamma$-equidecomposition then $G$ would admit a measurable perfect matching. The next lemma shows that elements of $\langle \Gamma \rangle$ do not move vertices of $G$ too far.

\begin{lemma}~\label{lem}
For every $\gamma \in \langle \Gamma \rangle$ and $y \in [0,1]$ such that $\gamma(y) \in [0,1]$
the distance of the corresponding vertices in $I$ is at most $2|b|$ in $G$, where $\gamma(x) = ax + 2\alpha b + 2c$.
\end{lemma}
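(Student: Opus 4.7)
\emph{Proof plan.} I will induct on $|b|$.

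For the base case $b = 0$, a direct case analysis on $\gamma(x) = ax + 2c$ subject to $y, \gamma(y) \in [0,1]$ forces $\gamma(y) = y$: if $a = 1$ then $|2c| \leq 1$ gives $c = 0$, while if $a = -1$ then $y + \gamma(y) = 2c$ together with $y, \gamma(y) \in [0,1]$ forces $y = \gamma(y) \in \{0, 1\}$. Hence the distance in $G$ is $0 = 2|b|$.

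For the inductive step $|b| \geq 1$, I will exhibit a length-two walk $y \to w \to z$ in $G$ (a \emph{double step}) together with an element $\gamma' \in \langle \Gamma \rangle$ satisfying $\gamma'(z) = \gamma(y)$ and $|b(\gamma')| = |b| - 1$; then the induction hypothesis applied to $\gamma'$ at $z$ gives a walk of length at most $2(|b|-1)$ from $z$ to $\gamma(y)$, and prepending the first two edges yields a walk of length at most $2|b|$ from $y$ to $\gamma(y)$. Every element of $\langle \Gamma \rangle$ is uniquely parametrised by its triple $(a,b,c)$, and if $\eta = g_2^{-1} g_1$ with $g_1, g_2 \in \Gamma$ is the net displacement of a double step, a short computation gives $b(\gamma \eta^{-1}) = b - a_\gamma a_\eta b_\eta$. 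Setting $\gamma' = \gamma \eta^{-1}$, it therefore suffices to select, at every $y \in I$, an available double step $\eta$ with $a_\eta b_\eta = a_\gamma \operatorname{sgn}(b)$.

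The main obstacle is this availability claim: at every $y \in I$, the two double steps emanating from $y$ in $G$ realise both signs $a_\eta b_\eta = +1$ and $a_\eta b_\eta = -1$. The verification is a finite case analysis. One partitions $I$ into the sub-intervals on which the set of generators in $\Gamma$ applicable at $y$ is constant (determined by the thresholds $0, \alpha, 1-\alpha, 1$), and similarly for the neighbour $w \in J$; then for each of the two neighbours of $y$ one reads off the unique ``outgoing'' generator at $w$ and computes the resulting $\eta$. Inspection of the finitely many resulting $\eta$'s shows that both signs of $a_\eta b_\eta$ always appear, completing the induction.
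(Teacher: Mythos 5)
Your proposal is correct and follows essentially the same route as the paper: induction on $|b|$ with the identical base case, and an inductive step that shortens $|b|$ by one using a length-two walk in $G$ followed by the induction hypothesis. The only (cosmetic) difference is that the paper applies the reducing double step at the target $\gamma(y)$, selecting it by explicit inequalities on the value of $\gamma(y)$ (e.g.\ $\gamma(y)\le 1-2\alpha$ versus $\gamma(y)>1-2\alpha$), whereas you apply it at the source $y$ and organise the same finite case check around the sign of $a_\eta b_\eta$.
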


\begin{proof}
We prove by induction on $|b|$. If $|b|=0$, that is, $\gamma(x)=ax+2c$ then we can easily check those
isometries $\gamma$ such that $\gamma([0,1]) \cap [0,1] \neq \emptyset$.
If $a=1$ then $c=0$ and $\gamma$ is the identity. If $a=-1$ then either $c=0, \gamma(x)=-x$ and $y=\gamma(y)=0$, or $c=1, \gamma(x)=-x+2$  and $y=\gamma(y)=1$.

Now consider $\gamma(x) = ax + 2\alpha b + 2c $, where $|b|>0$, and $y \in I$. If $b<0$ and $\gamma(y) \leq 1-2\alpha$ then by induction
$z=\gamma(y)+2\alpha =  ax + 2\alpha(b+1) + 2c \in I$ satisfies $\mathrm{dist}(y,z) \leq 2|b+1| = 2|b|-2$. Since $\mathrm{dist}(\gamma(y),z) \leq 2$ 
we get $\mathrm{dist}(y,\gamma(y)) \leq 2|b|$. 
If $\gamma(y) > 1-2\alpha$ then consider $z=2-(\gamma(y)+2\alpha) =  -ay - 2\alpha(b+1) + (2-2c) \in I$. Since $|b+1| = |b|-1$ we get the same inequalities as in the previous case.
If $b>0$ and $\gamma(y) > 2\alpha$ then we can use $z=\gamma(y)-2\alpha =  ay + 2\alpha (b-1) + 2c$, since $|b-1|=|b|-1$.  
If $b>0$ and $\gamma(y) \leq 2\alpha$ then consider $z=(2\alpha-\gamma(y))=-ay + 2\alpha (1-b) - 2c$ as an intermediate element using that $|1-b|=|b|-1$.
\end{proof}

We will study the union of bi-infinite paths of $G$. Recall that a {\it ray} in a graph is an infinite sequence of vertices in which each vertex 
appears at most once and each two consecutive vertices are adjacent.
Given a graph $H$ and a natural number $K$ denote by $H^K$ the graph with vertex set $V(H^K)=V(H)$, two vertices are adjacent in $H^K$ 
if their distance is odd and at most $K$.

\begin{pro}~\label{prop}
Let  $K$ be an odd natural number and $H$ be an acyclic $2$-regular measure preserving bipartite Borel graph over a standard Borel probability measure space. If $H^K$ admits a measurable perfect matching then $H$ admits a measurable perfect matching, too.
\end{pro}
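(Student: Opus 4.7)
The plan is to convert the given measurable matching $M$ of $H^K$ into a measurable matching of $H$ by a simple local Borel rule based on parity counting.

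Because $H$ is $2$-regular and acyclic, every connected component is a bi-infinite path, and (being bipartite) each such component carries exactly two perfect matchings, the two ``every-other-edge'' choices. Thus producing a measurable perfect matching of $H$ reduces to measurably singling out one of these two matchings within each component, using the information encoded by $M$.

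For each $e \in E(H)$ set
\[
c(e) := \#\bigl\{\{u,v\} \in M : e \text{ lies on the unique } H\text{-path from } u \text{ to } v\bigr\}.
\]
Since any pair in $M$ has $H$-distance at most $K$, only arcs with both endpoints in the $H$-ball of radius $K$ around $e$ can contribute, so $c$ is Borel. Define
\[
M' := \{e \in E(H) : c(e) \text{ is odd}\}.
\]
I claim $M'$ is a measurable perfect matching of $H$.

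The heart of the argument is a one-step parity claim: along any bi-infinite component $C$, enumerating its edges consecutively as $\dots, e_{k-1}, e_k, e_{k+1}, \dots$, one has $c(e_{k+1}) - c(e_k) = \pm 1$, so the parity of $c(e_k)$ strictly alternates in $k$. This is a short bookkeeping: the consecutive edges $e_k, e_{k+1}$ share a single vertex $w$, and the set of $M$-arcs crossing the current edge changes by exactly the unique $M$-arc incident to $w$, which is gained or lost depending on which side of $w$ its $M$-partner lies. Given this alternation, $M'$ restricted to $C$ is precisely every second edge of $C$, hence one of the two perfect matchings of $C$; assembling over components yields a perfect matching of $H$, which is Borel because $c$ is.

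I do not anticipate a serious obstacle: the construction is genuinely local, Borel measurability follows immediately from the $K$-boundedness of $c$, and the only nontrivial step is the one-step parity lemma sketched above.
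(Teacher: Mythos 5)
Your proof is correct, and it takes a genuinely different route from the paper's. The paper proceeds by an iterative improvement scheme: it defines, for each vertex $x$, the ray $r(x)$ pointing toward $M(x)$, repeatedly swaps partners at pairs of $A$-vertices whose rays point at each other, shows via the integral $\int_A \mathrm{dist}_H(x,M_n(x))\,d\lambda$ and Borel--Cantelli that the process converges, and then reads off a consistent orientation (hence a matching of $H$) from the nested-ray structure of the limit. Your argument replaces all of this with a single local parity invariant: the crossing number $c(e)$ of $M$-pairs over an $H$-edge $e$, with the key observation that consecutive edges of a bi-infinite path differ in crossing count by exactly $\pm 1$, since their crossing sets differ precisely in the unique $M$-arc at the shared vertex (every other arc crosses both or neither). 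I checked this step and the resulting alternation of parity; it is sound, and the $K$-boundedness of $M$-arcs makes $c$ a Borel function of the $K$-ball around $e$, so $M'$ is measurable. Your construction is one-shot rather than a limit, which buys several things: it avoids the Borel--Cantelli argument and the null set it leaves behind (a null exceptional set enters only if the input matching of $H^K$ is itself only almost perfect, in which case you extend arbitrarily on that null set of components, exactly as the paper does); it makes measurability transparent; and it never uses the measure-preserving hypothesis, which the paper's integral bookkeeping does rely on. The oddness of $K$ plays no role in your argument either, but that is only a normalization in the statement since $H^K=H^{K-1}$ for even $K$. The paper's rerouting-and-limit technique is arguably more flexible in situations lacking a clean conserved parity, but for this proposition your argument is the more elementary and self-contained of the two.
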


\begin{proof}
We denote the probability measure by $\lambda$.
Let $V(H)=V(H^K)=A \cup B$ be the Borel partition of these bipartite graphs into two independent sets.
Assume that $H^K$ admits a measurable perfect matching $M$. Given a vertex $x$ let $M(x)$ be its pair
and $r(x)$ the ray starting at $x$ and containing $M(x)$. Given $x,y \in A$ we write $\varphi(x,y)$ if
$dist_H(x,y)=2,  x \in r(y)$ and $y \in r(x)$. Note that the formula $\varphi$ defines a symmetric relation, and for every $x$ there exists at most one $y$
such that $\varphi(x,y)$ holds.
Consider the set of vertices $S(M)=\{ a \in A: \exists a' \in A \text{ }\varphi(a,a') \}$. \\
We will change $M$ and match every $a\in S(M)$ to $M(a')$ and $a'$ to $M(A)$, where $a'$ is chosen such that $\varphi(a,a')$ holds.

{\bf Claim 1:} Let $M$ be a measurable perfect matching of $H^K$ and consider \\ 
$$\begin{displaystyle} M'=\{(s,t): \varphi(s,M(t)) \text{ holds}\} \cup M \setminus \{(s,M(s)): s \in S(M) \}. \end{displaystyle}$$ \\
The set $M'$ is a measurable perfect matching of $H^K$ and \\
$$\begin{displaystyle} \int_A \mathrm{dist}_H(x,M'(x)) d \lambda(x) \leq \int_A \mathrm{dist}_H(x,M(x)) d \lambda(x) - \lambda(S(M)).\end{displaystyle}$$ 

\begin{proof}
The matching $M'$ covers the same set of vertices as $M$, since the added and removed pairs of edges are in one-to-one correspondence. 
Note that $\mathrm{dis}t_H(a,M'(a)) \leq \mathrm{dist}_H(a',M(a'))$, and $\mathrm{dist}_H(a',M'(a')) \leq \mathrm{dist}_H(a,M(a))$, hence $(a,M(a)), (a'M(a')) \in E(H^K)$.
The inequality \\ $\mathrm{dist}_H(a,M'(a))+\mathrm{dist}_H(a',M'(a'))\leq \mathrm{dist}_H(a',M(a'))+\mathrm{dist}_H(a,M(a))-2$ also holds proving the claim.
\end{proof}

We will find a measurable almost perfect matching $M$ of $H^K$ such that $\lambda(S(M))=0$.
We start with a measurable perfect matching $M_1$ and define a sequence of measurable perfect matchings.
If $\lambda(S(M_n))>0$ then we apply Claim 1 to $M_n$ in order to get $M_{n+1}$.
Note that the measure of the set of vertices where the matching is changed is at most $2\lambda(S(M_n))$.
The inequality 

$$\begin{displaystyle}\int_A \mathrm{dist}_H(x,M_n(x)) d \lambda(x) \geq  \lambda(S(M_n)) + \int_A \mathrm{dist}_H(x,M_{n+1}(x)) d \lambda(x) \end{displaystyle}$$ 

\noindent
implies $\sum_{n=1}^{\infty} \lambda(S(M_n)) \leq  \int_A \mathrm{dist}_H(x,M_1(x)) \leq K$. This sequence converges to a measurable almost perfect matching $M$ by the Borel-Cantelli lemma. Obviously $\lambda(S(M))=0$.

{\bf Claim 2:}
For almost every $a \in A$ for every $c \in A$ in the component of $a$ one of the two rays $r(a)$ and $r(c)$ contains the other.

\begin{proof} 
Let $a,c \in A$ be in the same component. It suffices to show that if none of the two rays $r(a)$ and $r(c)$ contains the other then the component contains either a vertex in $S(M)$ or an unmatched vertex.

If $a \in r(c)$ and $c \in r(a)$ then there is a vertex of $S(M)$ between $a$ and $c$: the vertex $d$ closest to $a$ such that $c \notin r(d)$ is in $S(M)$.
If there is no such vertex $d$ then $c \in S(M)$. The proof of the other case will be based on this observation, too.

Now assume that  $a \notin r(c)$ and $c \notin r(a)$. First, we find such a pair in the same component that their distance is minimal.
If the distance of $a$ and $c$ is greater than two then consider a vertex $d \in A$ between them. The ray $r(d)$ can not contain both $a$ and $c$, and $d$ is not contained
in any of the two rays $r(a)$ and $r(c)$. Hence one of the pairs $(a,d)$ and $(c,d)$ will satisfy the conditions. Therefore,
we may assume that $a$ and $c$ are at distance two. Let $b \in B$ denote the vertex between $a$ and $c$.
If $b$ is matched to a vertex in $r(a)$ then there is a vertex of $S(M)$ between this vertex and $a$. If $b$ is matched to a vertex in $r(c)$ 
then there is a vertex of $S(M)$ between this vertex and $c$.
\end{proof}

We define a measurable perfect matching of $H$ using $M$. Consider the components where for any two vertices $a,c \in A$ one of the two rays $r(a)$ and $r(c)$ contains the other. Let us match every vertex $a \in A$ in such a component to the unique vertex in $r(a)$ adjacent to $a$. This gives a measurable almost perfect matching. It can be extended on the nullset of the other components to a measurable perfect matching. 
\end{proof}

Suppose for a contradiction that $[0,1]$ and $[\alpha, 1+\alpha]$ admit a measurable $\Gamma$-equidecomposition. 
The set of vertices in bi-infinite paths of $G$ is measurable. Lemma~\ref{lem} and Proposition~\ref{prop} imply that this set  admits a measurable 
perfect matching. The other components are even cycles, odd paths or semi-infinite paths, since $G$ admits a perfect matching and its maximum degree is two. Therefore, these admit a measurable perfect matching, too. This contradicts Laczkovich's result and finishes the proof of the Theorem.

\end{document}